\theoremstyle{plain}
\newtheorem{theorem}{Theorem}[section]
\newtheorem*{theorem*}{Theorem}
\newtheorem{lemma}[theorem]{Lemma}
\newtheorem{corollary}[theorem]{Corollary}
\theoremstyle{definition}
\newtheorem{definition}[theorem]{Definition}
\newtheorem*{problem*}{Problem}
\newtheorem{remark}[theorem]{Remark}
\DeclarePairedDelimiter\floor{\lfloor}{\rfloor}
\newcommand{\regularize}[1]{\underaccent{\wtilde}{#1}}
\DeclareMathOperator{\rank}{R}
\DeclareMathOperator{\subrank}{Q}
\DeclareMathAccent{\wtilde}{\mathord}{largesymbols}{"65}
\DeclareMathOperator{\asymprank}{\underaccent{\wtilde}{R}}
\DeclareMathOperator{\asympsubrank}{\underaccent{\wtilde}{Q}}
\newcommand{\FF}{\mathbb{F}}
\newcommand{\NN}{\mathbb{N}}
\newcommand{\ZZ}{\mathbb{Z}}
\newcommand{\RR}{\mathbb{R}}
\DeclareMathOperator{\Hom}{Hom}
\newcommand{\jeroen}[1]{#1}
\newcommand{\defin}[1]{#1}
\let\leqx\leqslant
\let\geqx\geqslant
\newcommand{\asympleqx}{%
  \mathrel{%
    \vphantom{\leqslant}%
    \smash{\vcenter{\doasympleqx}}%
  }%
}
\newcommand{\doasympleqx}{%
  \hbox{\ooalign{%
    \noalign{\kern.25ex}
    $\leqslant$\cr
    \noalign{\kern1.25ex}
    \smash{$\sim$}\cr
  }}%
}
\newcommand{\asympdomleq}{%
  \mathrel{%
    \vphantom{\preccurlyeq}%
    \smash{\vcenter{\doasympdomleq}}%
  }%
}
\newcommand{\doasympdomleq}{%
  \hbox{\ooalign{%
    \noalign{\kern.25ex}
    $\preccurlyeq$\cr
    \noalign{\kern1.25ex}
    \smash{$\sim$}\cr
  }}%
}
\newcommand{\asympasympdomleq}{%
  \mathrel{%
    \vphantom{\preccurlyeq}%
    \smash{\vcenter{\doasympasympdomleq}}%
  }%
}
\newcommand{\doasympasympdomleq}{%
  \hbox{\ooalign{%
    \noalign{\kern.25ex}
    $\preccurlyeq$\cr
    \noalign{\kern1.25ex}
    \smash{$\sim$}\cr
    \noalign{\kern0.5ex}
    \smash{$\sim$}\cr
  }}%
}
\newcommand{\eps}{\varepsilon}
\DeclareMathOperator{\characteristic}{char}
\newcommand{\domleq}{\preccurlyeq}
\newcommand{\semiring}[1]{#1}
\newcommand{\tand}{\textnormal{ and }}
\newcommand{\tor}{\textnormal{ or }}
\newcommand{\graphs}{\mathcal{G}}
\newcommand{\aspec}{\mathbf{X}}
\newcommand{\textgraphs}{\{\textnormal{graphs}\}}
\let\leqx\leqslant
\let\geqx\geqslant
\newcommand{\asympgeqx}{%
  \mathrel{%
    \vphantom{\geqslant}%
    \smash{\vcenter{\doasympgeqx}}%
  }%
}
\newcommand{\doasympgeqx}{%
  \hbox{\ooalign{%
    \noalign{\kern.25ex}
    $\geqslant$\cr
    \noalign{\kern1.25ex}
    \smash{$\sim$}\cr
  }}%
}
\newcommand{\asympasympleqx}{%
  \mathrel{%
    \vphantom{\preccurlyeq}%
    \smash{\vcenter{\doasympasympleqx}}%
  }%
}
\newcommand{\doasympasympleqx}{%
  \hbox{\ooalign{%
    \noalign{\kern.25ex}
    $\leqx$\cr
    \noalign{\kern1.25ex}
    \smash{$\sim$}\cr
    \noalign{\kern0.5ex}
    \smash{$\sim$}\cr
  }}%
}
\DeclareMathOperator{\matrixrank}{rank}
\title[The asymptotic spectrum of graphs and the Shannon capacity]{The asymptotic spectrum of graphs\\ and the Shannon capacity}
\author{Jeroen Zuiddam}
\address{Centrum Wiskunde \& Informatica, Science Park 123, Amsterdam, Netherlands}
\address{\textit{Current affiliation:} Institute for Advanced Study, 1 Einstein Drive, Princeton NJ, USA}
\email{jzuiddam@ias.edu}
\date{\today}
\begin{document}

\begin{abstract}
We introduce the asymptotic spectrum of graphs and apply the theory of asymptotic spectra of Strassen (J.~Reine Angew.\ Math.\ 1988) to obtain a new dual characterisation of the Shannon capacity of graphs. Elements in the asymptotic spectrum of graphs include the Lovász theta number, the fractional clique cover number, the complement of the fractional orthogonal rank and the fractional Haemers bound.
\end{abstract}

\maketitle

\section{Introduction}\label{intro}

\subsection{Shannon capacity of graphs}
This paper is about the Shannon capacity of graphs, which was introduced by Shannon in~\cite{MR0089131}. %
Let $G$ be a (finite simple) graph with vertex set $V(G)$ and edge set $E(G)$. An independent set \jeroen{(also called stable set)} in~$G$ is a subset of~$V(G)$ that contains no edges. The independence number or stability number $\alpha(G)$ is the cardinality of the largest independent set in $G$. For graphs $G$ and $H$, the and-product $G \boxtimes H$, also called the strong graph product, is defined by
\begin{align*}
V(G \boxtimes H) &= V(G) \times V(H)\\
E(G \boxtimes H) &= \bigl\{ \{(g,h), (g'\!, h') \} :  \bigl(\{g,g'\} \in E(G) \tand \{h,h'\} \in E(H)\bigr) \\
&\hspace{9em}\tor \bigl(\{g,g'\} \in E(G) \tand h=h'\bigr) \\ 
&\hspace{9em}\tor \bigl(g=g' \tand \{h,h'\} \in E(H) \bigr)\bigr\}.
\end{align*}
The Shannon capacity $\Theta(G)$ is defined as the limit 
\[
\Theta(G) \coloneqq \lim_{N \to \infty} \alpha(G^{\boxtimes N})^{1/N}.
\]
This limit exists and equals the supremum $\sup_N \alpha(G^{\boxtimes N})^{1/N}$ by Fekete's lemma: if~$x_1, x_2, x_3, \ldots \in \RR_{\geq 0}$ satisfy \jeroen{$x_{m+n} \geq x_m + x_n$, then $\lim_{n\to\infty} x_n/n = \sup_n x_n/n$.}

Computing the Shannon capacity is nontrivial already for small graphs. Lovász in \cite{lovasz1979shannon} computed the value $\Theta(C_5) = \sqrt{5}$, where $C_k$ denotes the $k$-cycle graph, by introducing and evaluating a new graph parameter $\vartheta$ which is now known as the Lovász theta number. For example the value of~$\Theta(C_7)$ is currently not known. The Shannon capacity $\Theta$ is not known to be hard to compute in the sense of computational complexity. On the other hand, deciding whether~\jeroen{$\alpha(G) \geq k$}, given a graph $G$ and~$k\in \NN$, is NP-complete~\cite{MR0378476}. %

\subsection{Result: new dual characterisation of Shannon capacity}
The result of this paper is a new dual characterisation of the Shannon capacity of graphs. This characterisation is obtained by applying Strassen's theory of asymptotic spectra~\cite{strassen1988asymptotic}, which in turn is based on the representation theorem of Kadison--Dubois~\cite{MR707730} (see also~\cite{MR1829790} and~\cite{MR2383959}). %

To state \jeroen{our result}
we need the standard notions graph homomorphism, graph complement and graph disjoint union.
Let $G$ and $H$ be graphs.
A \defin{graph homomorphism} $f : G \to H$ is a map $f : V(G) \to V(H)$ such that for all $u,v \in V(G)$, if~$\{u,v\} \in E(G)$, then $\{f(u), f(v)\} \in E(H)$. In other words, a graph homomorphism maps edges to edges.
The \defin{complement}~$\overline{G}$ of $G$ is defined by
\begin{gather*}
V(\overline{G}) = V(G)\\ 
E(\overline{G}) = \bigl\{\{u,v\} : \{u,v\} \not\in E(G), u\neq v\bigr\}.
\end{gather*}
We define the relation $\leqx$ on graphs as follows: let $G \leqx H$
if there is a graph homomorphism~$\overline{G} \to \overline{H}$ from the complement of $G$ to the complement of $H$.
The \defin{disjoint union}~$G \sqcup H$ is defined by
\begin{gather*}
V(G \sqcup H) = V(G) \sqcup V(H)\\ %
E(G \sqcup H) = E(G) \sqcup E(H). %
\end{gather*}
For $n \in \NN$ the \defin{complete graph} $K_n$ is the graph with $V(K_n) = [n] \coloneqq \{1,2,\ldots, n\}$ and $E(K_n) = \{\{i,j\} : i,j \in [n], i\neq j\}$. Thus $K_0 = \overline{K_0}$ is the empty graph and~$K_1 = \overline{K_1}$ is the graph consisting of a single vertex and no edges.

\jeroen{We define the relation $\asympleqx$ on graphs as follows: let $G \asympleqx H$ if there is a sequence~$(x_N) \in \NN^\NN$ with $x_N^{1/N}\to 1$ when $N \to \infty$ such that for every $N\in\NN$
\[
G^{\boxtimes N} \leqx (H^{\boxtimes N})^{\sqcup x_N} = \underbrace{H^{\boxtimes N} \sqcup \cdots \sqcup H^{\boxtimes N}}_{\smash{x_N}}
\]
holds. }

\begin{theorem}\label{basic_th}
Let $\semiring{S}\subseteq \textgraphs$ be a collection of graphs which is closed under the disjoint union~$\sqcup$ and the strong graph product~$\boxtimes$, and which contains the graph with a single vertex, $K_1$.
Define the asymptotic spectrum $\aspec(\semiring{S})$ as the set of all maps~$\phi : \semiring{S} \to \RR_{\geq 0}$ such that, for all~$G,H \in \semiring{S}$
\begin{enumerate}[label=\upshape(\arabic*)]
\item if $G\leqx H$, then $\phi(G) \leq \phi(H)$ \label{mainthmon}
\item $\phi(G \sqcup H) = \phi(G) + \phi(H)$\label{mainthadd}
\item $\phi(G \boxtimes H) = \phi(G)\phi(H)$\label{mainthmult}
\item $\phi(K_1) = 1$.\label{mainthnorm}
\end{enumerate}
Then we have
\begin{enumerate}[label=\upshape(\roman*)]
\item $G \asympleqx H$ iff\, $\forall \phi \in \aspec(\semiring{S})\,\, \phi(G) \leq \phi(H)$\label{asymp_i}
\item $\Theta(G) = \min_{\phi \in \aspec(\semiring{S})} \phi(G)$.\label{asymp_ii}
\end{enumerate}
\end{theorem}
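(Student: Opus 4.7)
The result is an instance of Strassen's theorem on asymptotic spectra of preordered commutative semirings \cite{strassen1988asymptotic}, whose engine is the Kadison--Dubois representation theorem. My plan is three steps: (A) cast $(\semiring{S}, \sqcup, \boxtimes, \leqx)$ as such a semiring, (B) invoke the abstract theorem to obtain~(i), and (C) derive~(ii) from~(i), using the edgeless graphs $\overline{K_n}$ as independent-set probes.

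For (A), the commutative semiring axioms (identities $K_0$ and $K_1$; associativity; distributivity) are direct from the vertex- and edge-set definitions. The relation $\leqx$ is reflexive and transitive by identity and composition of homomorphisms, compatible with $\sqcup$ by side-by-side gluing of homomorphisms, and compatible with $\boxtimes$ by taking the Cartesian product of homomorphisms (a small edge-condition check on $\overline{G \boxtimes H}$). The Archimedean hypothesis holds because every $G \in \semiring{S}$ satisfies $G \leqx \overline{K_{|V(G)|}} = K_1^{\sqcup |V(G)|}$: any vertex bijection $V(G) \to V(K_{|V(G)|})$ is injective, hence a homomorphism $\overline{G} \to K_{|V(G)|}$. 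Step (B) is then a direct application of Strassen's spectral theorem, yielding both the nonemptiness of $\aspec(\semiring{S})$ and the duality (i).

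For (C), I use two elementary facts: $\phi(\overline{K_n}) = n$ for every $\phi \in \aspec(\semiring{S})$ (since $\overline{K_n} = K_1^{\sqcup n}$), and $\overline{K_n} \leqx G \iff \alpha(G) \geq n$ (a homomorphism $K_n \to \overline{G}$ is precisely an independent set of size $n$ in $G$). Applying these to $G^{\boxtimes N}$ together with monotonicity and multiplicativity yields $\alpha(G^{\boxtimes N})^{1/N} \leq \phi(G)$, hence $\Theta(G) \leq \phi(G)$ after $N \to \infty$. For the reverse inequality, the key lemma is that $\overline{K_n} \asympleqx G$ whenever $n \leq \Theta(G)$ is an integer: taking $x_N = \lceil n^N/\alpha(G^{\boxtimes N}) \rceil$ gives $\overline{K_n}^{\boxtimes N} = \overline{K_{n^N}} \leqx (G^{\boxtimes N})^{\sqcup x_N}$, while $x_N^{1/N} \to \max(1, n/\Theta(G)) = 1$. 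By (i) this yields $n \leq \phi(G)$. Replacing $G$ by $G^{\boxtimes k}$ and using $\Theta(G^{\boxtimes k}) = \Theta(G)^k$ together with the multiplicativity of $\phi$ gives $\lfloor \Theta(G)^k \rfloor^{1/k} \leq \phi(G)$ for every $k$; letting $k \to \infty$ gives $\Theta(G) \leq \phi(G)$, hence $\Theta(G) = \min_\phi \phi(G)$.

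The main obstacle is Step (B): Strassen's abstract spectral theorem, which I treat as a black box. The substantive but routine new work consists of verifying compatibility of $\leqx$ with $\boxtimes$ in (A), and the Fekete-style blocking in (C) that lifts the integer-level equivalence $\overline{K_n} \asympleqx G \iff n \leq \Theta(G)$ to the real-valued identity $\Theta(G) = \min_\phi \phi(G)$.
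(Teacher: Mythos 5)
Your steps (A) and (B) match the paper: it records the same semiring and Strassen--preorder verifications as Lemmas~\ref{lem1} and \ref{lem2} and then invokes the abstract duality (\cref{str_mainth}) to get statement~\ref{asymp_i}. The first half of your step (C) (from $\overline{K_{\alpha(G^{\boxtimes N})}} \leqx G^{\boxtimes N}$, monotonicity and multiplicativity give $\alpha(G^{\boxtimes N})^{1/N} \leq \phi(G)$, so $\Theta(G) \leq \phi(G)$) is also fine and matches the easy half of the paper's \cref{Qmin}.

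The problem is the second half of (C). You announce ``the reverse inequality'' but then prove the same inequality again. Your key lemma ($n \leq \Theta(G)$, $n$ integer, implies $\overline{K_n} \asympleqx G$), pushed through statement~\ref{asymp_i} and the $G^{\boxtimes k}$ blocking, yields $\Theta(G) \leq \phi(G)$ for every $\phi$ --- which you already had from the first half, by a shorter route. What the theorem still requires is the existence of some $\phi \in \aspec(\semiring{S})$ with $\phi(G) \leq \Theta(G)$, i.e.\ $\min_\phi \phi(G) \leq \Theta(G)$; nothing in your write-up addresses this. The duality~\ref{asymp_i} must be used in the \emph{other} direction: set $y \coloneqq \min_\phi \phi(G)$ (the minimum exists because $\aspec(\semiring{S})$ is compact in the evaluation topology, a point you should at least cite), observe $\phi(G^{\boxtimes m}) = \phi(G)^m \geq y^m \geq \lfloor y^m\rfloor = \phi\bigl(\overline{K_{\lfloor y^m\rfloor}}\bigr)$ for all $\phi$, deduce $\overline{K_{\lfloor y^m\rfloor}} \asympleqx G^{\boxtimes m}$ from~\ref{asymp_i}, unpack the definition of $\asympleqx$ to bound $\alpha\bigl((G^{\boxtimes m})^{\boxtimes N}\bigr)$ from below, and conclude $\Theta(G) \geq \lfloor y^m\rfloor^{1/m} \to y$ as $m\to\infty$. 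This is exactly what the paper's \cref{Qmin} does; note it also uses the hypothesis that $G^{\boxtimes k} \geqx \overline{K_2}$ for some $k$ (handled in the paper by a short case split on $G = K_0$ and $K_1 \leqx G \leqx K_1$), which your proposal skips. The converse of your key lemma (``$\overline{K_n} \asympleqx G \Rightarrow n \leq \Theta(G)$'', which is the ingredient you actually need and which follows readily from $n^N \leq x_N\,\alpha(G^{\boxtimes N})$) is stated in your closing paragraph as part of an equivalence, but it is never proved or used in the body of the argument.
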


\begin{remark}
Statement \ref{asymp_ii} of \cref{basic_th} is nontrivial in the sense that $\Theta$ is not an element of~$\aspec(\textgraphs)$. Namely,~$\Theta$ is not additive under $\sqcup$ by a result of Alon~\cite{alon1998shannon}, and $\Theta$ is not multiplicative under $\boxtimes$ by a result of Haemers~\cite{haemers1979some}. %
It turns out that the graph parameter~$G \mapsto \max_{\phi \in \aspec(\textgraphs)} \phi(G)$ is 
itself an element of~$\aspec(\semiring{\graphs})$, and is known as the fractional clique cover number $\overline{\chi}_f$ (see \cref{sec:frac} and e.g.~\cite[Eq.~(67.112)]{schrijver2003combinatorial}).
While writing this paper it was brought to the author's attention that in \jeroen{\cite[Example 8.25]{fritz2017resource}} a statement is proven that is slightly weaker than \cref{basic_th}, in the sense that condition \ref{mainthadd} is not imposed.
\end{remark}

\jeroen{
\begin{remark}
The relation $G \asympleqx H$ defined in \cref{basic_th} above can equivalently be characterised by $G^{\boxtimes N} \leqx H^{\boxtimes (N + o(N))}$. %
One may naturally consider the optimal coefficient $\alpha \in \RR$ for which an asymptotic inequality $G^{\boxtimes N} \leqx H^{\boxtimes (\alpha N + o(N))}$ holds, as is also studied in~\cite{wang2017graph}. Statement \ref{asymp_i} of \cref{basic_th} naturally implies a characterisation of this optimal rate in terms of the asymptotic spectrum.
\end{remark}
}

\subsection{Known elements in the asymptotic spectrum of graphs}
Several graph parameters from the literature can be shown to be in the asymptotic spectrum of graphs $\aspec(\textgraphs)$.
The elements in~$\aspec(\{\textnormal{graphs}\})$ that the author is currently aware of are: the Lovász theta number $\vartheta$ \cite{lovasz1979shannon}, the fractional clique cover number~$\overline{\chi}_{\smash{f}}$, the complement of the fractional orthogonal rank, $\overline{\xi_f}$%
~\cite{cubitt2014bounds}, and for each field~$\FF$ the fractional Haemers bound $\rank^\FF_f$ \cite{blasiak2013graph,bukh2018fractional}. Bukh and Cox in~\cite{bukh2018fractional} prove a separation result which implies that the fractional Haemers bounds provide an infinite family in~$\aspec(\{\textnormal{graphs}\})$!

\subsection{Strassen: asymptotic rank and asymptotic subrank of tensors}\label{tensors}
Volker Strassen developed the theory of asymptotic spectra in the context of tensors in the series of papers~\cite{Strassen:1986:AST:1382439.1382931, strassen1987relative, strassen1988asymptotic, strassen1991degeneration}. For any family~$\semiring{S}$ of tensors  (of some fixed order, over some fixed field, but with arbitrary dimensions) which is closed under tensor product and direct sum and which contains the ``diagonal tensors'', there is an asymptotic spectrum $\aspec(\semiring{S})$ characterising the ``asymptotic restriction preorder'' on~$\semiring{S}$. The asymptotic restriction preorder is closely related to the asymptotic rank and asymptotic subrank of tensors. Understanding these notions is in turn the key to understanding the computational complexity of matrix multiplication (see also~\cite{burgisser1997algebraic}), Strassen's original motivation.

Strassen constructed a collection of elements in $\aspec(\semiring{S})$ for the family $\semiring{S}$ of ``oblique tensors'', a strict subset of all tensors \cite{strassen1991degeneration}. These elements are called the support functionals.
Interestingly, the recent breakthrough result on the cap set problem~\cite{MR3583358,tao} can be proven using these support functionals~\cite{christandl2017universalproc}.
Only recently, Christandl, Vrana and Zuiddam in \cite{christandl2017universalproc} constructed an infinite family of elements in $\aspec(\{\textnormal{complex tensors of order $k$}\})$, called the quantum functionals.

\section{Asymptotic spectra}
We discuss the theory of asymptotic spectra, following \cite{strassen1988asymptotic}, \jeroen{but in the language of semirings instead of rings.}
Let~$(S, +, \cdot, 0, 1)$ be a commutative semiring, meaning that~$S$ is a set with a binary addition operation $+$, a binary multiplication operation~$\cdot$, and elements $0,1\in S$, such that for all $a,b,c \in S$
\begin{enumerate}[label=\upshape(\arabic*)]
\item $+$ is associative: $(a+b)+c = a + (b+c)$
\item $+$ is commutative: $a+b = b+a$
\item $0+a = a$
\item $\cdot$ is associative: $(a\cdot b)\cdot c = a \cdot (b \cdot c)$
\item $\cdot$ is commutative: $a\cdot b = b \cdot a$
\item $1\cdot a = a$
\item $\cdot$ distributes over $+$: $a\cdot (b+c) = (a\cdot b) + (a \cdot c)$
\item $0 \cdot a = 0$.
\end{enumerate}
For $n \in \NN$ we denote the sum of $n$ ones $1 + \cdots + 1 \in S$ by $n$.

Let $\leqx$ be a preorder on $S$, i.e.~$\leqx$ is a relation on $S$ such that for all $a,b,c \in S$
\begin{enumerate}[label=\upshape(\arabic*)]
\item $\leqx$ is reflexive: $a\leqx a$
\item $\leqx$ is transitive: $a\leqx b$ and $b\leqx c$ implies $a\leqx c$.
\end{enumerate}
\begin{definition}\label{strassendef}
A preorder $\leqx$ on $S$ is a \emph{Strassen preorder} if
\begin{enumerate}[label=\upshape(\arabic*)]
\item $\forall n, m \in \NN$\, $n\leq m$ in $\NN$ iff $n \leqx m$ in $S$ \label{strcond1}
\item $\forall a,b,c,d \in S$\, if $a\leqx b$ and $c\leqx d$, then $a + c \leqx b + d$ and $ac \leqx bd$\label{strcond2}
\item $\forall a,b \in S, b\neq 0$\, $\exists r \in \NN$\, $a \leqx rb$.\label{strcond3}
\end{enumerate}
\end{definition}

Let $S$ be a commutative semiring %
and let $\leqx$ be a Strassen preorder on $S$. 
We will use $\leq$ to denote the usual preorder on $\RR$.
Let~$\RR_{\geq0}$ be the semiring of non-negative real numbers. 
Let $\aspec(S, \leqx)$ be the set of $\leqx$-monotone semiring homomorphisms from~$S$ to $\RR_{\geq0}$,
\[
\aspec(S) \coloneqq \aspec(S, \leqx) \coloneqq \{ \phi \in \Hom(S, \RR_{\geq 0}) : \forall a,b \in S\,\, a\leqx b \Rightarrow \phi(a) \leq \phi(b) \}.
\]
We call $\aspec(S, \leqx)$ the \emph{asymptotic spectrum} of $(S, \leqx)$.
\jeroen{Note that for every~$\phi \in \aspec(S, \leqx)$ holds $\phi(1) = 1$ and thus $\phi(n) = n$ for all $n \in \NN$.}
For $a,b \in S$, let $a \asympleqx b$ if there is a sequence $(x_N) \in \NN^\NN$ with $x_N^{\smash{1/N}}\to 1$ when $N \to \infty$ such that for all~$N\in\NN$ we have $a^{N} \leqx b^{N}  x_N$. Fekete's lemma implies that in the definition of~$\asympleqx$ we may equivalently replace the requirement~$x_N^{\smash{1/N}}\to 1$ when $N\to\infty$ by~$\inf_N x_N^{\smash{1/N}} = 1$.
We call $\asympleqx$ the \emph{asymptotic preorder} induced by~$\leqx$.

\jeroen{
In terms of rings, the main theorem in the theory of asymptotic spectra is the following.

\begin{theorem}[{\cite[Cor.~2.6]{strassen1988asymptotic}}]\label{src}
Let $(R, +, \,\cdot\,, 0, 1)$ be a commutative ring and let~$R_+\subseteq R$ be a subset such that $0,1 \in R_+$, %
$-1\not\in R_+$ and $R = R_+ - R_+$, and such that~$(R_+, +,\,\cdot\,, 0,1)$ forms a semiring. %
Let~${\leqx}$ be a Strassen preorder on $R_+$ and let~${\asympleqx}$ be the asymptotic preorder induced by ${\leqx}$.
Let $X$ be the set of ring homomorphisms~$R \to \RR$ that are $\leqx$-monotone on $R_+$, i.e.\
\[
X = \{ \phi \in \Hom(R, \RR) : \forall a, b \in R_+\,\, a \leqx b \Rightarrow \phi(a) \leq \phi(b)\}.
\]
Then for $a,b \in R_+$ holds $a \asympleqx b \textnormal{ iff } \forall \phi \in X\,\, \phi(a) \leq \phi(b)$.
\end{theorem}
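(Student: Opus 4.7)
The plan is to handle the easy direction directly and reduce the hard direction to the Kadison--Dubois representation theorem. For the easy direction, if $a \asympleqx b$, there is $(x_N) \in \NN^\NN$ with $x_N^{1/N} \to 1$ and $a^N \leqx b^N \cdot x_N$ in $R_+$. Any $\phi \in X$ is a ring homomorphism $R \to \RR$ with $\phi(n) = n$ for $n \in \NN$ and monotone on $R_+$, so $\phi(a)^N \leq \phi(b)^N x_N$; taking $N$-th roots and letting $N \to \infty$ yields $\phi(a) \leq \phi(b)$.

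For the reverse direction I would first extend the preorder from $R_+$ to a cone $P$ on the ring $R$: let $P$ be the smallest sub-semiring of $R$ that contains $R_+$ and all differences $v - u$ with $u, v \in R_+$ and $u \leqx v$, so that ``$0 \leqx y$ in $R$'' precisely means $y \in P$. Using the Strassen axioms of \cref{strassendef} I would verify: (a)~$P$ is closed under $+$ and $\cdot$ --- sums are immediate from condition~(2); products use the identity $(v_1 - u_1)(v_2 - u_2) = v_1 v_2 + u_1 u_2 - u_1 v_2 - u_2 v_1$ together with $0 \leqx r$ for every $r \in R_+$, which follows from $0 \leqx 1$ (condition~(1)) combined with~(2), and this multiplicative closure is the delicate point; (b)~$P$ is Archimedean, i.e.\ $\forall x \in R\,\exists n \in \NN$ with $n - x \in P$, via condition~(3) and the hypothesis $R = R_+ - R_+$; and (c)~$-1 \notin P$, inherited from $-1 \notin R_+$. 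Kadison--Dubois then applies and asserts that an element $y \in R$ lies in the Archimedean closure $\overline{P} \coloneqq \{y \in R : \forall k \in \NN\,\, ky + 1 \in P\}$ if and only if $\phi(y) \geq 0$ for every ring homomorphism $\phi \colon R \to \RR$ nonnegative on $P$; these $\phi$ are exactly the elements of $X$.

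Applied to $y = b - a$, the hypothesis $\forall \phi \in X\,\, \phi(a) \leq \phi(b)$ of the theorem becomes $b - a \in \overline{P}$, i.e.\ $k(b - a) + 1 \in P$ for every $k \in \NN$. I expect the main obstacle to be the final translation of this additive ``approximate'' information into the multiplicative asymptotic inequality $a^N \leqx b^N x_N$ with $x_N^{1/N} \to 1$ that defines $a \asympleqx b$. My plan here is to use condition~(3) to fix an integer bound $a \leqx s$, exponentiate and unpack the additive approximations $k(b - a) + 1 \in P$ into genuine $R_+$-inequalities of the form $r \cdot a^N + t \leqx r' \cdot b^N + t'$ whose multiplicative factor $r'/r$ can be made arbitrarily close to $1$ and whose additive error is sublinear in $N$, absorb the error using powers of $s$, and then invoke Fekete's lemma on the resulting submultiplicative sequence to produce an $x_N$ with $x_N^{1/N} \to 1$. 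The verification of (a)--(c) and the identification of $X$ with the class of $\phi$ appearing in Kadison--Dubois is essentially bookkeeping; the genuine content lies in this last passage from Archimedean closure to a subexponential multiplicative bound.
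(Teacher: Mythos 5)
The paper does not prove this theorem; it is stated as a black-box citation to Strassen's \emph{Crelle} 1988 paper, with a later remark pointing to the Becker--Schwartz formulation of the Kadison--Dubois theorem and to the author's thesis \cite[Chapter 2]{phd} for a self-contained treatment. So there is no in-paper proof to compare against, and your overall plan of reducing the hard direction to Kadison--Dubois is indeed the route the literature takes. The easy direction you give is fine (with the minor observation that $\phi(a),\phi(b)\geq 0$ for $\phi\in X$ is needed and follows from $0\leqx a$, which you correctly derive from axioms (1) and (2)).

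The point you yourself flag as ``the delicate point'' is, however, a genuine gap, and the tools you cite do not resolve it. The Becker--Schwartz version of Kadison--Dubois applies to an Archimedean \emph{preprime}, i.e.\ a subset closed under both $+$ and $\cdot$. The natural candidate cone $P_0 = \{v-u : u,v\in R_+,\ u\leqx v\}$ is closed under $+$ by Strassen axiom (2), but closure under $\cdot$ amounts to deducing $u_1v_2 + u_2v_1 \leqx v_1v_2 + u_1u_2$ from $u_i\leqx v_i$, and this does \emph{not} follow from axioms (1)--(3); the fact $0\leqx r$ for $r\in R_+$ that you invoke only yields weaker bounds such as $u_1v_2+u_2v_1 \leqx 2v_1v_2$. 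If instead you take $P$ to be the sub-semiring generated by $R_+$ and $P_0$ (which is what ``smallest sub-semiring'' literally says), then closure under $\cdot$ is automatic but the verification of $-1\notin P$ is no longer ``inherited from $-1\notin R_+$'': the only obvious way to see $-1\notin P$ is to evaluate along a monotone ring homomorphism, whose existence is exactly what you are trying to prove, so the argument becomes circular. Strassen's actual argument (following Becker--Schwartz) avoids constructing a single preprime up front by using Zorn's lemma to extend to maximal Archimedean preorders on the ring, for which totality and hence a real-valued homomorphism can be extracted; some such device is needed here. Separately, the final step --- turning $k(b-a)+1\in P$ for all $k$ into $a^N\leqx b^N x_N$ with $x_N^{1/N}\to 1$ --- is sketched only at the level of intent; since this is precisely where the asymptotic preorder $\asympleqx$ (rather than $\leqx$) enters, it is not bookkeeping and needs to be carried out carefully.
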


Let $R$ be the Grothendieck ring of $S$. (As group under addition, $R$ is the quotient of the free abelian group on symbols $[a]$, $a \in S$, and the subgroup generated by the elements $[a+b] - [a] - [b]$, $a,b \in S$. To make $R$ a ring, multiplication is defined by setting $[a][b] = [ab]$, $a,b \in S$ and extending $\ZZ$-linearly.)
To study the asymptotic preorder~$\asympleqx$ on the semiring $S$, the natural approach is to apply \cref{src} to $R$. The canonical semiring homomorphism $S \to R : a \mapsto [a]$ is, however, not injective in general, which a priori seems an issue. Namely, $[a] = [b]$ if and only if there exists an element $c \in S$ such that $a+c = b+c$.

To see that noninjectivity is not an issue we use the following lemma. %
Proving the lemma is routine if done in the suggested order. A proof can be found in %
\cite[Chapter~2]{phd}.
\begin{lemma}\label{biglem}
Let $\domleq$ be a Strassen preorder on a commutative semiring $T$.
Let $\asympdomleq$ be the asymptotic preorder induced by $\domleq$ and let $\asympasympdomleq$ be the asymptotic preorder induced by~$\asympdomleq$. 
 Then the following are true.
\begin{enumerate}[label=\upshape(\roman*)]
\item Also $\asympdomleq$ is a Strassen preorder on $T$. \label{23}
\item For any $a_1, a_2 \in T$, if $a_1 \asympasympdomleq a_2$, then $a_1 \asympdomleq a_2$.\label{asympasymp}
\item For any $a_1, a_2, b \in T$ we have $a_1 + b \asympdomleq a_2 + b$ iff $a_1\asympdomleq a_2$. \label{add_reg}
\end{enumerate}
\end{lemma}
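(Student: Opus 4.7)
The plan is to prove parts~\ref{23}, \ref{asympasymp}, and \ref{add_reg} in that order. Throughout I use the equivalent ``one-shot'' reformulation of $\asympdomleq$ coming from Fekete's lemma: $a\asympdomleq b$ iff for every $\eps>0$ there exist $N\in\NN$ and $x\in\NN$ with $a^N\domleq b^N x$ and $x^{1/N}\leq 1+\eps$.

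For~\ref{23}, I verify the three Strassen-preorder axioms for $\asympdomleq$ in turn. Agreement with $\NN$ follows by taking $N$-th roots of $n^N\le m^N x_N$ inside $\NN$ and using $x_N^{1/N}\to 1$; conversely $n\le m$ gives $n\asympdomleq m$ with trivial witness $x_N=1$. The Archimedean condition descends directly, since $a\domleq rb$ entails $a\asympdomleq rb$. Multiplicativity is the termwise product of witness sequences. Additivity is handled by binomial expansion: if $a_i^N\domleq b_i^N x_N^{(i)}$ witness $a_i\asympdomleq b_i$, then
\[
(a_1+a_2)^N = \sum_{k=0}^{N}\binom{N}{k} a_1^k a_2^{N-k} \;\domleq\; \sum_{k=0}^{N}\binom{N}{k} b_1^k b_2^{N-k} x_k^{(1)} x_{N-k}^{(2)} \;\domleq\; (b_1+b_2)^N z_N,
\]
where $z_N=\max_{0\le k\le N} x_k^{(1)} x_{N-k}^{(2)}$ is again subexponential, being a maximum of subexponential sequences.

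For~\ref{asympasymp}, I would chain two applications of the one-shot criterion. Given $a_1\asympasympdomleq a_2$ and $\eps>0$, choose $\eps_1,\eps_2>0$ with $(1+\eps_1)(1+\eps_2)\le 1+\eps$. The one-shot version of $\asympasympdomleq$ supplies $N,x$ with $x^{1/N}\le 1+\eps_1$ and $a_1^N\asympdomleq a_2^N x$; applying the one-shot version of $\asympdomleq$ to this inequality with tolerance $\eps_2$ supplies $M,y$ with $y^{1/M}\le 1+\eps_2$ and $a_1^{NM}\domleq (a_2^N x)^M y = a_2^{NM}\,x^M y$. The combined witness $x^M y$ satisfies $(x^M y)^{1/(NM)} = x^{1/N} y^{1/(NM)} \le (1+\eps_1)(1+\eps_2)^{1/N}\le 1+\eps$, which certifies $a_1\asympdomleq a_2$.

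For~\ref{add_reg}, the ``if'' direction is immediate from the additivity of $\asympdomleq$ established in~\ref{23}: $a_1\asympdomleq a_2$ and $b\asympdomleq b$ combine to give $a_1+b\asympdomleq a_2+b$. The converse---asymptotic cancellation---is the main obstacle, and it is where all the preparatory facts combine. The plan is two-stage. First derive a crude bound: assuming $a_2\neq 0$, the Archimedean axiom for $\domleq$ gives $r\in\NN$ with $b\domleq r\cdot a_2$, whence $(a_2+b)^N\domleq (1+r)^N a_2^N$; combined with the pointwise lower bound $a_1^N\domleq (a_1+b)^N$ and the hypothesis $(a_1+b)^N\domleq (a_2+b)^N x_N$, one obtains $a_1\asympdomleq (1+r)\,a_2$. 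The delicate second stage is to shave the multiplicative overhead $(1+r)$ down to~$1$: I would raise the hypothesis to the $k$-th power using multiplicativity of $\asympdomleq$ from~\ref{23}, isolate the leading $a_i^k$-terms along the binomial expansion, and collapse the resulting doubled asymptotic rates into a single rate via~\ref{asympasymp}; letting $k\to\infty$ should drive the overhead to~$1$. The corner case $a_2=0$ is handled separately using the Archimedean axiom for $\asympdomleq$ to bound $b$. This iterative sharpening, which simultaneously uses all three preparatory facts, is the subtle step of the plan.
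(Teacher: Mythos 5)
Your plan for parts~(i) and~(ii) is essentially the standard one and works. For~(i), the binomial expansion with witness $z_N=\max_k x_k^{(1)}x_{N-k}^{(2)}$ is the right way to get additivity, and multiplicativity and the Archimedean property descend trivially; for~(ii), chaining the two one-shot criteria is correct (provided the one-shot reformulation via Fekete has been carefully established, including finiteness of the minimal witness via the Archimedean axiom, and handling the corner case $a_2 = 0$).

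The problem is in your treatment of the cancellation direction of~(iii), which is the delicate part. Two issues. First, in the ``crude bound'' stage you invoke the ``pointwise lower bound'' $a_1^N\domleq (a_1+b)^N$. This is equivalent to $0\domleq\sum_{j<N}\binom{N}{j}a_1^j b^{N-j}$, and $0\domleq c$ is \emph{not} an axiom of Strassen preorders (the Archimedean axiom with $a=0$ is vacuously satisfied by $r=0$). One can repair this: from $1\domleq rb$ and $0\domleq 1$ one gets $0\domleq rb$, hence $0\domleq rb^N$ with \emph{constant} witness $r$, so $0\asympdomleq b$ and therefore $a_1\asympdomleq a_1+b$ with constant witness. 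But this needs to be said; $\domleq$ cannot simply be asserted.

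The more serious issue is the second stage. The plan ``raise the hypothesis to the $k$-th power, isolate the leading $a_i^k$-terms, collapse via~(ii), let $k\to\infty$'' is a hope, not an argument. When one actually tries it, the natural route is to pass to $k a_1+b \asympdomleq k a_2+b$ (by iterating the hypothesis) and then to $k^N a_1^N \domleq (k+r)^N a_2^N\cdot(\textnormal{subexp})$. At this point one is stuck: a semiring has no division, so there is no way to ``divide by $k^N$'' and turn this into a subexponential witness for $a_1^N\domleq a_2^N\cdot(\textnormal{subexp})$; discarding the $k^N$ on the left (via $a_1^N\domleq k^N a_1^N$) only returns the crude bound $a_1\asympdomleq (k+r)a_2$, which is \emph{worse}, not better, as $k\to\infty$. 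Likewise, ``isolating the leading term of $(a_1+b)^k$'' does not gain anything beyond the crude bound, because extracting $a_1^k$ from the expansion and then bounding $(a_2+b)^k\domleq(1+r)^k a_2^k$ simply reproduces the constant overhead $1+r$ independently of $k$. So the proposed mechanism for driving the overhead to~$1$ does not work as described, and ``collapsing via~(ii)'' does not rescue it. This step needs a genuinely different idea, and since it is precisely what makes the passage to the Grothendieck ring well-defined in the paper, it is the core content of the lemma rather than a routine afterthought.
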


Let $R_+ \subseteq R$ be the image of $S$ under the canonical map $S \to R$. Define the relation $\asympleqx$ on $R_+$ by letting $[a] \asympleqx [b]$ if $a \asympleqx b$ for any $a,b \in S$. One verifies using \cref{biglem} that~$\asympleqx$ is a Strassen preorder on $R_+$ and that for any $a,b \in S$ holds~$[a] \asympleqx [b]$ if and only if $a \asympleqx b$.  Moreover, one verifies that $\asympasympleqx$ coincides with $\asympleqx$ on $R_+$. Applying \cref{src} to $R_+ \subseteq R$ with the Strassen preorder $\asympleqx$, and using the fact that $\aspec(S, \leqx) = \aspec(S, \asympleqx)$, yields the following corollary.

\begin{corollary}%
\label{str_mainth}
Let $S$ be a commutative semiring and let $\leqx$ be a Strassen preorder on $S$. Then
\[
\forall a,b \in S\quad
a\asympleqx b \,\,\textnormal{ iff }\,\, \forall\phi \in \aspec(S, \leqx)\,\, \phi(a) \leq \phi(b).
\]
\end{corollary}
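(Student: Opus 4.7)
The plan is to reduce \cref{str_mainth} to the ring-theoretic \cref{src} via the Grothendieck construction. First I would form the Grothendieck ring $R$ of $S$ together with the canonical semiring map $\iota : S \to R$, $a \mapsto [a]$, and set $R_+ = \iota(S)$. The map $\iota$ need not be injective, so $\leqx$ does not descend to a well-defined relation on $R_+$ a priori. This is where the asymptotic preorder enters: by \cref{biglem}\ref{add_reg}, $\asympleqx$ on $S$ is cancellative with respect to addition, which is exactly what makes the definition $[a] \asympleqx [b] :\Leftrightarrow a \asympleqx b$ unambiguous on $R_+$. Indeed, if $[a]=[a']$ and $[b]=[b']$ then $a+c = a'+c$ and $b+d=b'+d$ for some $c,d \in S$, and cancellation gives $a \asympleqx b \Leftrightarrow a' \asympleqx b'$.

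Next I would verify the hypotheses of \cref{src} for the triple $(R, R_+, \asympleqx)$. That $\asympleqx$ is a Strassen preorder on $R_+$ follows from \cref{biglem}\ref{23} together with the fact that $R_+$ inherits its semiring structure from $S$ via $\iota$. The conditions $0,1 \in R_+$ and $R = R_+ - R_+$ are immediate from the Grothendieck construction. For $-1 \notin R_+$: if $-1=[a]$ with $a \in S$ then $a+1+c = c$ in $S$ for some $c$, and iterating gives $n\cdot(a+1)+c = c$ for every $n \in \NN$; combining this with \ref{strcond3} (bound $c \leqx r$ for some $r \in \NN$) contradicts the sharpness of $\leqx$ on $\NN$ supplied by \ref{strcond1}.

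Applying \cref{src} then yields: for $a,b \in S$, $[a] \asympasympleqx [b]$ iff $\phi([a]) \leq \phi([b])$ for every ring homomorphism $\phi : R \to \RR$ that is $\asympleqx$-monotone on $R_+$. Two identifications finish the argument. First, \cref{biglem}\ref{asympasymp} gives that $\asympasympleqx$ coincides with $\asympleqx$, so the left side reads $a \asympleqx b$. Second, restriction induces a bijection between the set of $\asympleqx$-monotone ring homomorphisms $R \to \RR$ and $\aspec(S, \asympleqx)$: every semiring homomorphism $S \to \RR_{\geq 0}$ extends uniquely to a ring homomorphism $R \to \RR$ by the universal property of the Grothendieck construction, and it automatically sends $R_+$ into $\RR_{\geq 0}$ because by \ref{strcond3} every $a \in S$ satisfies $a \leqx r$ for some $r \in \NN$. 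Finally, $\aspec(S, \asympleqx) = \aspec(S, \leqx)$, because any $\leqx$-monotone semiring map to $\RR_{\geq 0}$ is automatically $\asympleqx$-monotone: from $a^N \leqx b^N x_N$ one obtains $\phi(a)^N \leq \phi(b)^N x_N$, and taking $N$th roots with $x_N^{1/N}\to 1$ yields $\phi(a) \leq \phi(b)$.

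The main obstacle I anticipate is precisely the non-injectivity of $\iota$: this is why the argument must be routed through $\asympleqx$ rather than $\leqx$, since only the asymptotic preorder is cancellative and therefore descends unambiguously to the Grothendieck ring. The verification that $-1 \notin R_+$ is a secondary but essential archimedean check that ultimately rests on conditions \ref{strcond1} and \ref{strcond3} of the Strassen preorder, and without the groundwork provided by \cref{biglem} the passage from semiring to ring would be genuinely problematic.
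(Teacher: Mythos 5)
Your proof follows the same route the paper sketches: pass to the Grothendieck ring, use \cref{biglem} to show that $\asympleqx$ descends to a well-defined Strassen preorder on $R_+$ (well-definedness via the additive cancellation of part~\ref{add_reg}) and that its own asymptotic closure coincides with itself, and then invoke \cref{src} together with the identification $\aspec(S,\leqx)=\aspec(S,\asympleqx)$. One small step worth spelling out in your $-1\notin R_+$ check: to pass from $n(a+1)+c=c$ and $c\leqx r$ to a contradiction with \ref{strcond1}, you need $n\leqx n(a+1)+c$, which rests on $0\leqx a$ for every $a\in S$; this does hold (combine $0\leqx 1$ from \ref{strcond1} with $a\leqx a$ and multiply via \ref{strcond2} to get $0=0\cdot a\leqx 1\cdot a=a$), but it is not itself an axiom and deserves a line.
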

}

\jeroen{
\begin{remark}
One can prove that the semiring of graphs $S$ that we will consider in \cref{sec:graphs} is, in fact, additively cancellative, which means that for any $a,b,c \in S$ holds $a+c = b+c$ if and only if $a = b$. 
In that case the canonical map $S \to R$ is injective and the statement of \cref{str_mainth} follows directly from \cref{src}. The above argument shows that $S$ being additively cancellative is not necessary for the conclusion of \cref{str_mainth} to hold.
\end{remark}}

\begin{remark}
Alternatively, to prove \cref{str_mainth} one may integrate the proofs of Strassen \cite{strassen1988asymptotic} and Becker--Schwartz \cite{MR707730} in a manner that avoids passing to the Grothendieck ring altogether. Such a proof can be found in the author's PhD thesis~\cite[Chapter 2]{phd}.
\end{remark}

\begin{remark}

For $a \in S$, let 
$\hat{a} : \aspec(S) \to \RR_{\geq 0} : \phi \mapsto \phi(a)$.
Let $\RR_{\geq 0}$ have the Euclidean topology.
Endow $\aspec(S)$ with the weak topology with respect to the maps~$\hat{a}$ for~$a \in S$. %
It can be shown that with this topology $\aspec(S)$ is a nonempty compact Hausdorff space, see e.g.~\cite{strassen1988asymptotic}.
\end{remark}

\section{Rank and subrank}\label{str_sec:rank}
Strassen in \cite{strassen1988asymptotic} studied the asymptotic rank and asymptotic subrank of tensors (cf.~\cref{tensors}).
We generalise the notions of asymptotic rank and asymptotic subrank of tensors to arbitrary semirings $S$ with a Strassen preorder $\leqx$.
Let $a \in S$. %
Define the \defin{rank} 
\[
\rank(a) \coloneqq \min \{ r \in \NN : a \leqx r\}
\]
and the \defin{subrank} 
\[
\subrank(a) \coloneqq \max\{ s \in \NN : s \leqx a\}.
\]
Then $\subrank(a) \leq \rank(a)$. %
Define the \defin{asymptotic rank}
\begin{equation*}\label{str_asympr}
\asymprank(a) \coloneqq \lim_{N \to \infty} \rank(a^N)^{1/N}.
\end{equation*}
When $a = 0$ or $a\geq1$ define the \defin{asymptotic subrank}
\begin{equation*}\label{str_asymps}
\asympsubrank(a) \coloneqq \lim_{N \to \infty} \subrank(a^N)^{1/N}.
\end{equation*}
By Fekete's lemma these limits exist, and
asymptotic rank is an infimum and asymptotic subrank is a supremum as follows,
\begin{align*}
\asymprank(a) &= \inf_N \rank(a^N)^{1/N}\\
\asympsubrank(a) &= \sup_N \subrank(a^N)^{1/N} \textnormal{ when $a = 0$ or $a\geq1$}.
\end{align*}
\cref{str_mainth} implies that the asymptotic rank and asymptotic subrank have the following dual characterisation in terms of the asymptotic spectrum. %
The proof is essentially the same as the proof of \cite[Th.~3.8]{strassen1988asymptotic}. %

\begin{corollary}[cf.~{\cite[Th.~3.8]{strassen1988asymptotic}}]\label{Qmin}\label{str_rankthm}
\jeroen{Let $a \in S$ such that %
$a\geqx 1$ and such that there exists an element $k\in \NN$ with $a^k \geqx 2$.} Then
\begin{align*}
\asympsubrank(a) &= \min_{\phi \in \aspec(S)} \phi(a).
\end{align*}
\jeroen{Let $a \in S$ such that $a \geqx 1$. Then}
\begin{align*}
\asymprank(a) &= \max_{\phi \in \aspec(S)} \phi(a).
\end{align*}
\end{corollary}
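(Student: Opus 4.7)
The plan is to deduce both equalities from \cref{str_mainth}, which already characterises $\asympleqx$ dually as $\phi(a)\leq\phi(b)$ for all $\phi\in\aspec(S)$. In both cases one direction is a soft monotonicity argument and the other requires translating the defining infimum/supremum of asymptotic rank/subrank into an asymptotic inequality in $S$ to which \cref{str_mainth} can be applied. Throughout I will use that $\aspec(S)$ is nonempty compact (hence the $\min$ and $\max$ are attained) and that every $\phi\in\aspec(S)$ sends $n\in\NN\subseteq S$ to $n\in\RR_{\geq 0}$.

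For asymptotic rank, the easy direction is: for every $\phi\in\aspec(S)$ and every $N$, the definition $a^N\leqx \rank(a^N)$ together with monotonicity and multiplicativity of $\phi$ gives $\phi(a)^N\leq \rank(a^N)$, so $\phi(a)\leq \rank(a^N)^{1/N}$, and taking the infimum over $N$ yields $\phi(a)\leq \asymprank(a)$. For the converse I let $t\coloneqq\max_{\phi\in\aspec(S)}\phi(a)$ and show $\asymprank(a)\leq t$. Fix $q\in\NN$ and set $r\coloneqq\lceil t^q\rceil\in\NN$. Then $\phi(a^q)=\phi(a)^q\leq t^q\leq r=\phi(r)$ for every $\phi\in\aspec(S)$, so by \cref{str_mainth} we have $a^q\asympleqx r$, i.e.\ there exists $(y_N)$ with $y_N^{1/N}\to 1$ and $a^{qN}\leqx r^N y_N$. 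This means $\rank(a^{qN})\leq r^N y_N$, hence $\asymprank(a)=\inf_N \rank(a^{qN})^{1/(qN)}\leq r^{1/q}=\lceil t^q\rceil^{1/q}$. Letting $q\to\infty$ gives $\asymprank(a)\leq t$.

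For asymptotic subrank, the easy direction is symmetric: for every $\phi$ and every $N$, $\subrank(a^N)\leqx a^N$ gives $\subrank(a^N)\leq \phi(a)^N$, so taking $N$-th roots and supremum yields $\asympsubrank(a)\leq\phi(a)$, and thus $\asympsubrank(a)\leq \min_{\phi}\phi(a)$. For the other direction, let $s\coloneqq\asympsubrank(a)$; the hypotheses $a\geqx 1$ and $a^k\geqx 2$ give $s\geq 2^{1/k}>1$, so for every $q\in\NN$ large enough and every integer $r<s^q$ we have $\subrank(a^{qN})\geq r^N$ for all $N$ sufficiently large (using $\asympsubrank(a^q)=s^q$ as a supremum), hence $r^N\leqx a^{qN}$ eventually. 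Choosing $(y_N)$ with $y_N=1$ eventually and $y_N^{1/N}\to 1$, this says $r\asympleqx a^q$, so by \cref{str_mainth}, $r\leq\phi(a^q)=\phi(a)^q$ for every $\phi\in\aspec(S)$. Therefore $r^{1/q}\leq\phi(a)$ for every $\phi$; letting first $r\to s^q$ (through $\NN$) and then $q\to\infty$ yields $s\leq\phi(a)$ for every $\phi$, hence $s\leq\min_\phi\phi(a)$.

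The main obstacle is the subrank direction, specifically the jump from the real-valued supremum defining $\asympsubrank(a)$ to an asymptotic inequality $r\asympleqx a^q$ between genuine elements of $S$; this is exactly where the hypothesis $a^k\geqx 2$ is used, since without it one could have $s=1$ and no integer $r\geq 2$ would satisfy $r<s^q$ for any $q$. Everything else is bookkeeping: multiplicativity and additivity of $\phi$, Fekete's lemma (already invoked in the definitions), and the fact that $\min$ and $\max$ over $\aspec(S)$ are attained by compactness.
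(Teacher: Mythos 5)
Your rank argument is correct and is essentially the same as the paper's (dualize via the nontrivial direction of \cref{str_mainth}, then unpack the resulting asymptotic inequality to bound $\rank$). However, your subrank argument has a genuine gap: it proves the easy inequality twice and never proves the hard one. Both your ``easy direction'' paragraph and your ``other direction'' paragraph conclude with $\asympsubrank(a)\leq\min_\phi\phi(a)$. Indeed, in the second paragraph you start from $s=\asympsubrank(a)$, derive $r\asympleqx a^q$ for integers $r<s^q$, and then invoke \cref{str_mainth} — but in the \emph{trivial} direction (from an asymptotic inequality to a spectral inequality), which only yields $s\leq\phi(a)$ for every $\phi$. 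That is not a converse; it is a second proof of the same bound. What is missing is $\asympsubrank(a)\geq\min_\phi\phi(a)$.

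The fix is to mirror your own (correct) rank argument: set $y\coloneqq\min_\phi\phi(a)$ and note $y\geq 1$ since $a\geqx 1$. For every $m$ you have $\phi(a^m)\geq y^m\geq\floor{y^m}$ for all $\phi\in\aspec(S)$, so by the \emph{nontrivial} direction of \cref{str_mainth} you get $\floor{y^m}\asympleqx a^m$, i.e.\ $\floor{y^m}^N\leqx a^{mN}\cdot 2^{\eps_{m,N}}$ for some $\eps_{m,N}\in o(N)$ (choose $x_N\leq 2^{\eps_{m,N}}$). Now the hypothesis $a^k\geqx 2$ is used not to conclude $\asympsubrank(a)>1$ — which is harmless but beside the point — but to absorb the $o(N)$ slack on the right: $a^{mN+k\eps_{m,N}}\geqx a^{mN}2^{\eps_{m,N}}\geqx\floor{y^m}^N$, whence $\subrank(a^{mN+k\eps_{m,N}})\geq\floor{y^m}^N$. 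Taking roots and letting $m=m(N)\to\infty$ with $\eps_{m(N),N}\in o(N)$ gives $\asympsubrank(a)\geq y$. Without this step the statement $\asympsubrank(a)=\min_\phi\phi(a)$ is not established.
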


\begin{proof}
Let $\phi \in \aspec(S)$. For $N \in \NN$, $\subrank(a^N) \leq \jeroen{\phi(a^N) =  \phi(a)^N}$. Therefore $\asympsubrank(a) \leq \phi(a)$. We conclude~$\asympsubrank(a) \leq \min_{\phi \in \aspec(S)} \phi(a)$.
It remains to prove $\asympsubrank(a) \geq \min_{\phi \in \aspec(S)} \phi(a)$.
Let~$a\geqx 1$ and~$a^k \geqx 2$.
Let $y \coloneqq \min_{\phi \in \aspec(S)} \phi(a)$. From $a\geqx 1$ follows \jeroen{that for all~$\phi \in \aspec(S)$ holds $\phi(a) \geq \phi(1) = 1$}, and so~$y \geq1$. By definition of $y$ we have
\[
\forall \phi \in \aspec(S)\quad \phi(a) \geq y.
\]
Take the $m$th power on both sides,
\[
\forall \phi \in \aspec(S), m \in \NN\quad \phi(a^m) \geq y^m.
\]
Take the floor on the right-hand side,
\[
\forall \phi \in \aspec(S), m \in \NN\quad \phi(a^m) \geq \floor{y^m}. %
\]
Apply \cref{str_mainth} to pass to the asymptotic preorder
\[
\forall m \in \NN\quad a^m \asympgeqx \floor{y^m}.
\]
Then, by the definition of asymptotic preorder,
\[
\forall m,N \in \NN\quad a^{m N} 2^{\eps_{m,N}} \geqx \floor{y^m}^{N}\quad\textnormal{for some}\quad \eps_{m,N} \in o(N).
\]
Now we use $a^k \geqx 2$ to get
\[
\forall m,N \in \NN\quad a^{m N + k \eps_{m,N}} \geqx \floor{y^m}^{N}.
\]
Then
\[
\forall m,N \in \NN\quad \subrank(a^{mN + k \eps_{m,N}})^{\smash{\tfrac{1}{mN + k\eps_{m,N}}}} \geq \floor{y^m}^{\smash{\tfrac{N}{mN + k \eps_{m,N}}}}.
\]
Choose $m = m(N)$ with $m(N)\to\infty$ as $N \to \infty$ and $\eps_{m(N),N} \in o(N)$ to obtain~$\asympsubrank(a) = \sup_N \subrank(a^N)^{1/N} \geq y$.
This proves the first statement.

The second statement is proven similarly.
\end{proof}

\section{The asymptotic spectrum of graphs}\label{sec:graphs}
We apply the theory of the previous two sections to graphs.
Let~$\graphs$ be the set of isomorphism classes of finite simple graphs. Let~$\boxtimes$ be the strong graph product, let~$\sqcup$ be the disjoint union of graphs, and
let $K_n$ be the complete graph with $n$ vertices, as defined in the introduction.

\begin{lemma}\label{lem1}
The set $\graphs$ with addition $\sqcup$, multiplication $\boxtimes$, additive unit $K_0$ and multiplicative unit $K_1$ is a commutative semiring. %
\end{lemma}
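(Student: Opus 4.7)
The plan is a straightforward verification of the eight semiring axioms for $(\graphs,\sqcup,\boxtimes,K_0,K_1)$. Since $\graphs$ consists of \emph{isomorphism classes}, each axiom amounts to exhibiting a bijection of vertex sets that preserves the edge relation. I would organize the proof by handling the additive axioms, the multiplicative axioms, and finally the mixed axioms (distributivity and zero-absorption) separately.

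For the additive structure, associativity and commutativity of $\sqcup$, together with the identity law $K_0 \sqcup G = G$, transfer mechanically from the corresponding properties of disjoint unions of sets: both $V(G_1 \sqcup G_2) = V(G_1) \sqcup V(G_2)$ and $E(G_1 \sqcup G_2) = E(G_1) \sqcup E(G_2)$ are disjoint unions, and $K_0$ is empty.

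For the multiplicative structure, commutativity $G \boxtimes H \cong H \boxtimes G$ follows by swapping coordinates: the three-clause edge condition defining $\boxtimes$ is manifestly symmetric in the two graphs. Associativity is the step with the most content; expanding both $(G \boxtimes H) \boxtimes K$ and $G \boxtimes (H \boxtimes K)$ one finds that $\{(g,h,k),(g',h',k')\}$ is an edge in each, if and only if in each coordinate $i$ one has either equality or an edge of the corresponding graph, and at least one coordinate is an edge. The identity $K_1 \boxtimes G \cong G$ collapses after projecting away the unique $K_1$-vertex in the first coordinate, leaving exactly the edge condition of $G$.

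The remaining axioms are $K_0 \boxtimes G = K_0$, which is immediate since $V(K_0) \times V(G) = \emptyset$, and distributivity $G \boxtimes (H \sqcup K) = (G \boxtimes H) \sqcup (G \boxtimes K)$. For distributivity, the canonical set bijection $V(G) \times (V(H) \sqcup V(K)) \cong (V(G)\times V(H)) \sqcup (V(G) \times V(K))$ is the candidate isomorphism. The key observation is that an edge of $G \boxtimes (H \sqcup K)$ cannot have endpoints whose second coordinates lie on different sides of the disjoint union: each of the three clauses in the definition of edges of $\boxtimes$ forces the two second coordinates either to be equal or to be joined by an edge in $H \sqcup K$, and neither is possible across the two parts. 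Hence every edge stays within one summand, matching precisely the edge set on the right. There is no genuine obstacle here; the entire lemma is bookkeeping, and the only place where a small case analysis is needed is associativity of $\boxtimes$.
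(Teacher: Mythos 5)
Your verification is correct and complete, and it is precisely the routine axiom-by-axiom check the lemma calls for; the paper itself omits a proof because the verification is straightforward. One small phrasing nit: in your characterization of $\boxtimes$-associativity you should make explicit that "at least one coordinate is an edge" is what rules out the degenerate case where all coordinates are equal (which would not yield a valid edge of a simple graph), but this is exactly how you are implicitly using it, so the argument stands.
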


The complements $\overline{K_0}, \overline{K_1}, \overline{K_2}, \ldots$ of the complete graphs behave like $\NN$ in $\graphs$.

Let $G, H \in \graphs$. Let $G \leqx H$ if there is a graph homomorphism $\overline{G} \to \overline{H}$. In other words, $G \leqx H$ iff there is a map $\phi : V(G) \to V(H)$ such that $\forall u, v \in V(G)$ with~$u\neq v$, if $\{u,v\} \not\in E(G)$, then $\{\phi(u), \phi(v)\} \not\in E(H)$.

\jeroen{The reader readily verifies the following lemma.}
\begin{lemma}\label{lem2}
The relation $\leqx$ is a Strassen preorder on $\graphs$. That is: %
\begin{enumerate}[label=\upshape(\roman*)]
\item For $n, m \in \NN$, $\overline{K_n} \leqx \overline{K_m}$ iff $n \leq m$.\label{grpr_i}
\item If $A \leqx B$ and $C \leqx D$, then $A\sqcup C \leqx B\sqcup D$ and $A\boxtimes C \leqx B\boxtimes D$.\label{welldef}\label{grpr_ii} %
\item For $A,B \in \graphs$, if $B \neq K_0$, then there is an $r \in \NN$ with $A \leqx \overline{K_r} \boxtimes B$.\label{grpr_iii}
\end{enumerate}
\end{lemma}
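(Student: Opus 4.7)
The plan is to unpack the defining relation $G \leqx H$ (existence of a homomorphism $\overline{G} \to \overline{H}$) and verify each of the three Strassen conditions by direct construction. The key preliminary observations are: (a) $\overline{\overline{K_n}} = K_n$, so $\overline{K_n} \leqx \overline{K_m}$ amounts to a homomorphism $K_n \to K_m$; (b) the complement of a disjoint union is the ``join'' of complements, i.e.\ in $\overline{A \sqcup C}$ two vertices in the same side are adjacent exactly when they are adjacent in $\overline{A}$ or $\overline{C}$, and two vertices in different sides are always adjacent; (c) the complement of a strong product is the disjunctive (``OR'') product, i.e.\ $(a,c) \sim (a',c')$ in $\overline{A \boxtimes C}$ iff $\{a,a'\} \in E(\overline{A})$ or $\{c,c'\} \in E(\overline{C})$. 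All three observations follow from a one-line case analysis of the definitions.

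For \ref{grpr_i}, any graph homomorphism $K_n \to K_m$ must send distinct vertices to distinct vertices (the edge between them must map to an edge), so it is injective and gives $n \leq m$; conversely, if $n \leq m$ the identity inclusion works.

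For \ref{grpr_ii}, given homomorphisms $f \colon \overline{A} \to \overline{B}$ and $g \colon \overline{C} \to \overline{D}$, I would define $h \colon V(A) \sqcup V(C) \to V(B) \sqcup V(D)$ by $f$ on the $A$-side and $g$ on the $C$-side, and check that edges within each side of $\overline{A \sqcup C}$ map to edges by (b), while edges between the two sides land between the two sides of $\overline{B \sqcup D}$ and are therefore automatically edges. For the strong product I would define $h(a,c) = (f(a), g(c))$ and use (c): an edge $(a,c) \sim (a',c')$ of $\overline{A \boxtimes C}$ witnessed on the first coordinate gives $\{f(a), f(a')\} \in E(\overline{B})$, which in particular forces $f(a) \neq f(a')$, so $h(a,c)$ and $h(a',c')$ are distinct and adjacent in $\overline{B \boxtimes D}$ by (c); the case of a witness on the second coordinate is symmetric.

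For \ref{grpr_iii}, pick any $b_0 \in V(B)$ (possible since $B \neq K_0$) and set $r = |V(A)|$; fix a bijection $\iota \colon V(A) \to [r]$ and define $\phi \colon V(A) \to [r] \times V(B)$ by $\phi(a) = (\iota(a), b_0)$. For any edge $\{u,v\}$ of $\overline{A}$ one has $\iota(u) \neq \iota(v)$, so by the disjunctive description (c) together with the fact that $\overline{\overline{K_r}} = K_r$ has all off-diagonal pairs as edges, $\phi(u)$ and $\phi(v)$ are adjacent in $\overline{\overline{K_r} \boxtimes B}$. Thus $\phi$ is a homomorphism, witnessing $A \leqx \overline{K_r} \boxtimes B$.

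There is no real obstacle here; the only thing to be careful about is that the componentwise map in the strong-product case does not collapse a non-loop edge to a loop, which is exactly what is guaranteed by the ``$\{f(a),f(a')\} \in E(\overline{B})$ forces distinctness'' observation above. This is why the lemma is phrased as routine.
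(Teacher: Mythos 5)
The paper offers no proof here, stating only that ``the reader readily verifies the following lemma,'' so your job is to supply the routine verification, and you have done so correctly. Your three preliminary observations are exactly right: $\overline{\overline{K_n}} = K_n$, the complement of a disjoint union is the join, and the complement of the strong product is the disjunctive (OR) product. In particular in observation (c) you do not need to add ``and $(a,c) \neq (a',c')$'' since that distinctness is forced by either disjunct, and you correctly exploit this in part \ref{grpr_ii} to see that the componentwise map never collapses an edge of $\overline{A \boxtimes C}$ to a loop. The argument for \ref{grpr_i} (a homomorphism $K_n \to K_m$ must be injective because adjacent vertices cannot be identified), the case split in \ref{grpr_ii} for edges within a side versus across sides of the join, and the explicit $r = |V(A)|$, $\phi(a) = (\iota(a), b_0)$ construction for \ref{grpr_iii} are all complete and correct. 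This is the standard way to verify the claim; there is nothing to compare against in the paper.
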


Let~$\asympleqx$ be the asymptotic preorder induced by~$\leqx$.
As in the previous section, let~$\subrank(G) = \max \{m \in \NN : m \leqx G\}$ be the subrank. One verifies that~$\subrank(G)$ equals the independence number~$\alpha(G)$. %
Let~$\rank(G) = \min\{ n \in \NN : G \leqx n\}$ be the rank. One verifies that $\rank(G)$ equals the clique cover number $\overline{\chi}(G)$, i.e.~the chromatic number of the complement~$\chi(\overline{G})$.

Recall the definition of the Shannon capacity $\Theta(G) \coloneqq \lim_{N \to \infty} \alpha(G^{\boxtimes N})^{1/N}$. Thus~$\Theta(G)$ equals the asymptotic subrank $\asympsubrank(G)$.
One analogously defines the asymptotic clique cover number $\regularize{\overline{\chi}}(G) = \lim_{N \to \infty}\overline{\chi}(G^{\boxtimes N})^{1/N}$, which equals the asymptotic rank $\asymprank(G)$.
It is a nontrivial fact that the parameter $\regularize{\overline{\chi}}(G)$ equals the so-called fractional clique cover number~$\overline{\chi}_f(G)$, see~\cref{sec:frac}. %

\begin{proof}[\upshape\bfseries Proof of \cref{basic_th}]
Let $S\subseteq \graphs$ be a semiring.
By \cref{lem1} and \cref{lem2} we may apply \cref{str_mainth}. This gives statement \ref{asymp_i} of \cref{basic_th}.
Let~$G\in S$.
If $G = K_0$, then $\phi(G) = 0 = \asympsubrank(G)$ for any $\phi \in \aspec(S)$.
If $K_1 \leqx G \leqx K_1$, then~$\phi(G) = 1 = \asympsubrank(G)$ for any $\phi \in \aspec(S)$.
Otherwise %
$G \geqx \overline{K_2}$. Then we may apply \cref{Qmin}. This gives statement \ref{asymp_ii} of \cref{basic_th}.
\end{proof}

We are left with a clear goal: explicitly describe the asymptotic spectrum of graphs~$\aspec(\graphs)$.

\section{Known elements in the asymptotic spectrum of graphs}

We finish with an overview of some known elements in the asymptotic spectrum of graphs~$\aspec(\graphs)$. 
\subsection{Lovász theta number}
\jeroen{For any real symmetric matrix~$A$ let $\Lambda(A)$ be the largest eigenvalue. 
The Lovász theta number $\vartheta(G)$ is defined as
\[
\vartheta(G) \coloneqq \min\{ \Lambda(A) : A \in \RR^{V(G)\times V(G)} \textnormal{ symm.}, \{u,v\}\not\in E(G)\Rightarrow A_{uv} = 1\}.
\]}%
The parameter $\vartheta(G)$ was introduced by Lovász in \cite{lovasz1979shannon}. We refer to~\cite{knuth1994sandwich} and~\cite{schrijver2003combinatorial} for a survey.
It follows from now well-known properties that
$\vartheta \in \aspec(\graphs)$. %

\subsection{Fractional graph parameters}\label{sec:frac}
Besides the Lovász theta number there are several elements in $\aspec(\textgraphs)$ %
that are naturally obtained as fractional versions of sub-multiplicative, sub-additive, $\leqx$-monotone maps $\graphs \to \RR_{\geq 0}$.
For any map $\phi : \graphs \to \RR_{\geq0}$ we define a fractional version $\phi_f$ by
\[
\phi_f(G) = \inf_d \frac{\phi\bigl( \overline{\overline{G} \boxtimes K_d} \bigr)}{d}.
\]
We will discuss several fractional parameters from the literature. %
\subsubsection{Fractional clique cover number}

We consider the fractional version of the clique cover number~$\overline{\chi}(G) = \chi(\overline{G})$. %
It is well-known that
$\overline{\chi}_f \in \aspec(\graphs)$, see e.g.~\cite{schrijver2003combinatorial}.
The fractional clique cover number $\overline{\chi}_f$ in fact equals the asymptotic clique cover number~$\regularize{\overline{\chi}}(G) = \lim_{N \to \infty}\overline{\chi}(G^{\boxtimes N})^{1/N}$ which we introduced in the previous section, see~\cite{mceliece1971hide} and also~\cite[Th.~67.17]{schrijver2003combinatorial}.

\subsubsection{Fractional Haemers bound}

Let $\matrixrank(A)$ denote the matrix rank of any matrix~$A$.
For any set $C$ of matrices %
define $\matrixrank(C) \coloneqq \min\{\matrixrank(A): A \in C\}$. 
\jeroen{For a field $\FF$ and a graph $G$ define the set of matrices
\begin{align*}
M^\FF(G) \coloneqq \{ A \in \FF^{V(G) \times V(G)} : \forall u,\!v\,\, A_{vv} \neq 0, \{u,v\} \not\in E(G) \Rightarrow A_{uv} = 0\}.
\end{align*}}%
Let
$\rank^\FF(G) \coloneqq \matrixrank(M^\FF(G))$. %
The parameter $\rank^{\smash{\FF}}(G)$ was introduced by Haemers in~\cite{haemers1979some} and is known as the Haemers bound.
The fractional Haemers bound~$\rank^{\smash{\FF}}_{\smash{f}}$ was studied by Anna Blasiak in~\cite{blasiak2013graph} and was recently shown to be~$\boxtimes$-mul\-ti\-pli\-ca\-tive by Bukh and Cox in~\cite{bukh2018fractional}. From this it is not hard to prove that~$\rank^{\smash{\FF}}_{\smash{f}} \in \aspec(\graphs)$. %
Bukh and Cox in \cite{bukh2018fractional} furthermore prove a separation result: for any field $\FF$ of nonzero characteristic and any $\eps>0$, there is a graph~$G$ such that for any field~$\FF'$ with $\characteristic(\FF)\neq\characteristic(\FF')$ the inequality~$\rank^\FF_f(G) <\eps \rank^{\FF'}_f(G)$ holds. This separation result implies that there are infinitely many elements in~$\aspec(\graphs)$!

\subsubsection{Fractional orthogonal rank}

In \cite{cubitt2014bounds} the orthogonal rank $\xi(G)$ and its fractional version the projective rank $\xi_f(G)$ are studied. 
It easily follows from results in~\cite{cubitt2014bounds} that %
$G\mapsto \xi_f(\overline{G})$ is in $\aspec(\graphs)$.

\subsubsection*{\upshape\bfseries Acknowledgements}
The author thanks Harry Buhrman, Matthias Christandl, Péter Vrana, Jop Briët, Dion Gijswijt, Farrokh Labib, Māris Ozols, Michael Walter, Bart Sevenster, Monique Laurent, Lex Schrijver, Bart Litjens and the members of the A\&C PhD\,\&\,postdoc seminar at CWI for useful discussions and encouragement. The author is supported by~NWO (617.023.116) and the QuSoft Research Center for Quantum Software. The author initiated this work when visiting the Centre for the Mathematics of Quantum Theory (QMATH) at the University of Copenhagen.

\raggedright
\bibliographystyle{alphaurlpp}
\bibliography{all}%

\end{document}